\setlist[enumerate]{label=\textup{(\roman*)}}
\numberwithin{equation}{section}
\numberwithin{table}   {section}
\newtheorem{thm}        {Theorem}[section]
\newtheorem{lem}  [thm] {Lemma}
\newtheorem{cor}  [thm] {Corollary}
\newtheorem{prop} [thm] {Proposition}
\theoremstyle{definition}
\newtheorem{dfn}  [thm] {Definition}
\newtheorem{note} [thm] {Notes}
\newtheorem{nts}  [thm] {Notation}
\DeclareMathOperator{\aCyc}    {aCyc}
\DeclareMathOperator{\codim}   {codim}
\DeclareMathOperator{\radikal} {Rad}
\DeclareMathOperator{\tr}      {tr}
\newcommand{\nat} {\mathbb{N}}
\newcommand{\zahl}{\mathbb{Z}}
\newcommand{\blank}{{-}}
\newcommand{\id}       {\ensuremath{{1 \le i \le \ell(A)}}} 
\newcommand{\Cyc}   [2]{\ensuremath{\operatorname{Cyc}^{#1}(#2)}}
\newcommand{\cartan}[1]{\ensuremath{C_{#1}}}
\newcommand{\komm}  [1]{\ensuremath{K(#1)}}
\newcommand{\kr}    [2]{\ensuremath{KR^{#1}(#2)}}
\newcommand{\rad}   [2]{\ensuremath{\radikal^{#1}(#2)}}
\newcommand{\Ext}   [1]{\operatorname{Ext}^{#1}}
\begin{document}

\title
    [On theorems of {B}rauer-{N}esbitt and {B}randt]
    {On theorems of {B}rauer-{N}esbitt and {B}randt for\\ characterizations of small block algebras}

\author[S.~Koshitani]{Shigeo Koshitani}
\address{Center for Frontier Science, Chiba University, 1-33 Yayoi-cho, Inage-ku, Chiba, 263-8522, Japan.}
\email{koshitan@math.s.chiba-u.ac.jp}

\author[T.~Sakurai]{Taro Sakurai}
\address{Department of Mathematics and Informatics, Graduate School of Science, Chiba University, 1-33, Yayoi-cho, Inage-ku, Chiba-shi, Chiba, 263-8522 Japan}
\email{tsakurai@math.s.chiba-u.ac.jp}

\thanks{The first author was partially supported by the Japan Society for
Promotion of Science (JSPS), Grant-in-Aid for Scientific Research
(C)15K04776, 2015--2018}

\keywords{
    codimension,
    commutator subspace,
    finite-dimensional algebra,
    Morita invariant,
    Morita equivalence}
\subjclass[2010]{
    16G10   
    (16P10, 
	16E40,  
    20C20)} 
\date{\today}
\begin{abstract}
    \noindent 
    In 1941, Brauer-Nesbitt established a characterization of a block with trivial defect group as a block $B$ with $k(B) = 1$ where $k(B)$ is the number of irreducible ordinary characters of $B$.
    In 1982, Brandt established a characterization of a block with defect group of order two as a block $B$ with $k(B) = 2$.
    These correspond to the cases when the block is Morita equivalent to the one-dimensional algebra and to the non-semisimple two-dimensional algebra, respectively.

    In this paper, we redefine $k(A)$ to be the codimension of the commutator subspace $K(A)$ of a finite-dimensional algebra $A$ and
	prove analogous statements for arbitrary (not necessarily symmetric) finite-dimensional algebras.
    This is achieved by extending the Okuyama refinement of the Brandt result to this setting.
    To this end, we study the codimension of the sum of the commutator subspace $K(A)$ and $n$th Jacobson radical $\operatorname{Rad}^n(A)$.
    We prove that this is Morita invariant and give an upper bound for the codimension as well.

\end{abstract}

\maketitle

\tableofcontents

\section{Introduction}
\label{sec: introduction}

\noindent 
Modular representation theory of finite groups aims to understand
the structure of a block of a finite group algebra and its invariants.
The complexity of the representations of a block \( B \) is measured by invariants including
the defect group \( P \),
the number of irreducible ordinary characters \( k(B) \), the number of irreducible modular characters \( \ell(B) \), and
the Cartan matrix \cartan{B} of the block \( B \).
A semisimple block, a block of the simplest kind, is characterized as a block of defect zero;
The characterization has the following equivalent condition, which was essentially due to Brauer-Nesbitt~\cite[Section~11]{BN41}.
\begin{equation}
    P \cong 1            \iff k(B) = 1.
\end{equation}
A few decades later Brandt~\cite[Theorem~A]{Bra82} proved similar.
\begin{equation}
    P \cong \zahl/2\zahl \iff \text{\( k(B) = 2 \) (and \( \ell(B) = 1 \))}.
\end{equation}
This condition characterizes a block that is Morita equivalent to the group algebra of $\zahl/2\zahl$.
For textbook accounts for these, see the Landrock textbook~\cite[Section~I.16]{Lan83}.
We call a block $B$ small if $k(B)$ is small.

The aim of this paper is to gain a better understanding of the above results
by generalizing to arbitrary finite-dimensional algebras.
Throughout this paper,
\( A \) denotes a finite-dimensional algebra over a field \( F \) and
 \( \{\, S_i \mid \id \,\} \) a complete set of pairwise non-isomorphic simple right \( A \)-modules.
An obstacle for generalization is that there is no established notion of \( k(A) \).
To overcome this difficulty we introduce the following definition.

\begin{dfn}
    Let \komm{A} be the \emph{commutator subspace} of \( A \)
    (i.e., the \( F \)-subspace of \( A \) spanned by \( xy - yx \) for all \( x, y \in A \)).
    Then we define
    \begin{equation}
        k(A) \coloneqq \codim \komm{A}.
    \end{equation}
\end{dfn}

The new definition of \( k(A) \) for an algebra \( A \) coincides with the old \( k(B) \) for a block \( B \) by K\"ulshammer~\cite[Lemma~A(ii)]{Kue81}.
Hence \( k(A) \) is supposed to measure the complexity of the representations of the algebra \( A \).
Indeed, we are able to prove that this is the case if \( k(A) \) is small.

\begin{thm}[See also Theorem~\ref{thm: Chlebowitz}]
    \label{thm: k = 1}
    Suppose that \( F \) is a splitting field for \( A \).
    Then the following statements are equivalent.
    \begin{enumerate}
        \item \( A \) is Morita equivalent to \( F \).
            \label{item: Morita equiv. to F}
        \item \( k(A) = 1 \).
            \label{item: k = 1}
    \end{enumerate}
\end{thm}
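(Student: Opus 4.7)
The plan is to exploit Morita invariance of $k(A)=\codim\komm{A}$ (which the paper establishes as a consequence of the Morita invariance of $\codim(\komm{A}+\rad{n}{A})$, applied with $n$ large enough that $\rad{n}{A}=0$) together with the standard reduction to a basic algebra. Direction (i)$\Rightarrow$(ii) is then immediate: $F$ is commutative, so $\komm{F}=0$ and $k(F)=1$; Morita invariance transfers this to $A$.

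For the converse (ii)$\Rightarrow$(i), I would pass to a basic algebra $B$ that is Morita equivalent to $A$; by invariance $k(B)=1$ as well. Since $F$ is a splitting field and $B$ is basic, the Wedderburn decomposition gives $B/\radikal(B)\cong F^{\ell}$, where $\ell=\ell(A)$. Because $F^{\ell}$ is commutative, $\komm{F^{\ell}}=0$, so the canonical projection $B\twoheadrightarrow B/\radikal(B)$ factors through $B/\komm{B}$; this yields $\ell\le\dim B/\komm{B}=1$, whence $\ell=1$ and $B$ is local. Writing $B=F\cdot 1_{B}\oplus J$ with $J=\radikal(B)$, the centrality of $F\cdot 1_{B}$ shows $\komm{B}=[J,J]\subseteq J^{2}$, while $\komm{B}\subseteq J$ together with $\codim\komm{B}=1=\codim J$ forces $\komm{B}=J$. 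Thus $J=J^{2}$, and nilpotence of $J$ forces $J=0$. Hence $B\cong F$, and $A$ is Morita equivalent to $F$.

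The principal obstacle is the Morita invariance of $k$: without it neither direction is tractable from the bare definition of $\komm{A}$, and it is visibly one of the paper's main technical inputs. The splitting hypothesis on $F$ is equally essential at the basic-algebra step, since it rules out the possibility that the simple $B$-modules have nontrivial division-ring endomorphism rings; otherwise one would only conclude that $A$ is Morita equivalent to a division algebra over $F$. Once these two ingredients are in place, the rest of the argument is an elementary dimension count leveraging the centrality of the scalars in a local basic algebra.
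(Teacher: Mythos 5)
Your proof is correct, and for the nontrivial direction it takes a genuinely different route from the paper. The paper deduces \ref{item: k = 1} \( \implies \) \ref{item: Morita equiv. to F} from Theorem~\ref{thm: truncated poly. ring} with \( n = 1 \), whose proof runs through the machinery of Section~\ref{sec: upper bound}: the identities \( \codim \kr{1}A = \ell(A) \) and \( \codim \kr{2}A = \ell(A) + \sum_i \dim \Ext1(S_i, S_i) \) from Theorem~\ref{thm: Brauer Okuyama} are used to bound \( \dim \radikal(A)/\rad{2}{A} \), and then the classification of basic local Nakayama algebras (Lemma~\ref{lem: basic local Nakayama}) is invoked. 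You share the first half of this (reduction to a basic algebra via Theorem~\ref{thm: Morita invariance}, and the inequality \( \ell(A) \le k(A) \), which you reprove directly by noting \( \komm{B} \subseteq \radikal(B) \) since \( B/\radikal(B) \cong F^{\ell} \) is commutative --- this is exactly the content of \( \codim\kr{1}B = \ell(B) \le k(B) \)). But your finishing move is different and more elementary: writing the local basic algebra as \( B = F\cdot 1_B \oplus J \), you observe \( \komm{B} = [J,J] \subseteq J^2 \), and the codimension count \( \codim \komm{B} = 1 = \codim J \) with \( \komm{B} \subseteq J \) forces \( J = \komm{B} \subseteq J^2 \), whence \( J = 0 \) by nilpotence. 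This bypasses \( \kr{2}{B} \), the \( \Ext1 \) computation, and the Nakayama-algebra lemma entirely. What the paper's heavier route buys is the uniform statement of Theorem~\ref{thm: truncated poly. ring} for all \( n \) (and hence Theorem~\ref{thm: k = 2 and l = 1} as well); what yours buys is a short, self-contained argument for the \( k(A) = 1 \) case needing only Morita invariance.
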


\begin{thm}[See also Theorem~\ref{thm: Chlebowitz}]
    \label{thm: k = 2 and l = 1}
    Suppose that \( F \) is a splitting field for \( A \).
    Then the following statements are equivalent.
    \begin{enumerate}
        \item \( A \) is Morita equivalent to \( F[X]/(X^2) \).
            \label{item: Morita equiv. to F[X]/(X^2)}
        \item \( k(A) = 2 \) and \( \ell(A) = 1 \).
            \label{item: k = 2 and l = 1}
    \end{enumerate}
\end{thm}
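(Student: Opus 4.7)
The plan is to use Morita invariance of both \(k(\blank)\) (in the new commutator-codimension sense, established earlier in the paper) and \(\ell(\blank)\) to reduce each direction to a canonical representative, and then execute a short Nakayama-style structure argument on the basic algebra.

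For (i)~\(\Rightarrow\)~(ii), I would simply compute the invariants on \(F[X]/(X^2)\): it is commutative, so \(\komm{F[X]/(X^2)} = 0\) and \(k = 2\); and it is local with residue field \(F\), so \(\ell = 1\). Both equalities then transfer to \(A\) by Morita invariance.

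For (ii)~\(\Rightarrow\)~(i), I would replace \(A\) by a basic algebra \(B\) in its Morita equivalence class. Since \(F\) is a splitting field and \(\ell(B) = 1\), basicness forces \(B\) to be local with \(B/\radikal(B) \cong F\). Write \(r = \radikal(B)\), so \(B = F \cdot 1 \oplus r\) as \(F\)-spaces. The key observation is
\begin{equation*}
    \komm{B} \subseteq r^{2},
\end{equation*}
because if \(a = \alpha + x\) and \(b = \beta + y\) with \(\alpha, \beta \in F\) and \(x, y \in r\), the scalars cancel and \([a,b] = xy - yx \in r^{2}\). Since \(\komm{B} \subseteq r^{2}\) and \(k(B) = 2\), the quotient \(B/r^{2}\) has dimension at most \(2\); combined with \(\dim B/r = 1\) this yields \(\dim r/r^{2} \le 1\).

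The endgame is a two-case analysis. If \(r = r^{2}\), Nakayama's lemma (valid because \(r\) is nilpotent) gives \(r = 0\), whence \(B = F\) and \(k(B) = 1\), a contradiction. So \(\dim r/r^{2} = 1\), and Nakayama again yields \(r = xB\) for some \(x \in r\). Unfolding \(r = xB = Fx + xr = Fx + Fx^{2} + x^{2}r = \cdots\) and using nilpotency of \(r\), I get \(r = \sum_{i \ge 1} Fx^{i}\), so \(B\) is generated as an \(F\)-algebra by \(1\) and \(x\), hence commutative. Then \(\komm{B} = 0\) forces \(\dim B = k(B) = 2\), which pins down \(r = Fx\) with \(x^{2} \in r^{2} = 0\), and therefore \(B \cong F[X]/(X^{2})\). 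The main obstacle is establishing \(\komm{B} \subseteq r^{2}\) and then extracting commutativity from \(\dim r/r^{2} = 1\); once those are in hand, every remaining step is either dimension counting or an appeal to the Morita-invariance machinery built up earlier in the paper.
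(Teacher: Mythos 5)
Your proof is correct, and it takes a more direct route than the paper. The paper deduces this theorem as an immediate corollary of its Theorem~\ref{thm: truncated poly. ring}, the characterization of all truncated polynomial algebras \( F[X]/(X^n) \): one checks that \( k(A) = 2 \) forces \( \codim \kr{2}A \le 2 \) (since \( \komm{A} \subseteq \kr{2}A \)), and then invokes that theorem with \( n = 2 \). That argument in turn rests on the formula \( \codim \kr{2}A = \ell(A) + \sum_i \dim \Ext1(S_i, S_i) \) from Theorem~\ref{thm: Brauer Okuyama}\ref{item: Okuyama} to conclude \( \dim \radikal(A)/\rad{2}A \le 1 \), and on Lemma~\ref{lem: basic local Nakayama} to identify the resulting basic local Nakayama algebra. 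Your key observation, \( \komm{B} \subseteq r^2 \) for a local algebra \( B \), is exactly the germ of the paper's proof of Theorem~\ref{thm: Brauer Okuyama}\ref{item: Okuyama} specialized to \( \ell = 1 \) (there it appears as \( \komm{e_iAe_i} \subseteq \rad{2}{e_iAe_i} \)), and from it you extract \( \dim r/r^2 \le 1 \) by pure dimension counting, bypassing the Ext computation entirely. Your Nakayama-lemma unrolling \( r = xB = \sum_{i \ge 1} Fx^i \) then replaces the appeal to Lemma~\ref{lem: basic local Nakayama}, and your final step (nilpotency of \( x \) plus commutativity forcing \( \dim B = k(B) = 2 \), hence \( r^2 = 0 \)) is sound. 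What your approach buys is a short, self-contained argument needing only Morita invariance of \( k(\blank) \) (the paper's Theorem~\ref{thm: Morita invariance}); what the paper's approach buys is the uniform treatment of \( F[X]/(X^n) \) for all \( n \) and the reusable machinery (the Otokita bound and the \( \Ext1 \) formula) that has independent interest.
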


Note that we cannot omit the condition \( \ell(A) = 1 \) in the above
(Note~\ref{note: k = l = 2}).
If, among others, \( A \) is non-semisimple symmetric then we can omit the condition (Proposition~\ref{prop: k = l}).
We also remark that a natural candidate \( k^*(A) \coloneqq \dim Z(A) \), the dimension of the center, does not work here.
Consider the \( F \)-algebra \( T_n \) of lower triangular matrices of degree \( n \in \nat \).
Then \( k^*(T_n) = 1 \) and the analog is no longer true.

These theorems are obtained from a study of the codimension of the \( F \)-subspace of \( A \) defined by
\begin{equation}
    \kr{n}A \coloneqq \komm{A} + \rad{n}A \qquad (n \in \nat)
\end{equation}
where \rad{n}{A} denotes the \( n \)th Jacobson radical of \( A \).
The following has been known for finite-dimensional symmetric algebras, while we extend it for \emph{arbitrary} finite-dimensional algebras.

\begin{thm}[See also Note~\ref{note: about theorem}]
    \label{thm: Brauer Okuyama}
    Suppose that \( F \) is a splitting field for \( A \).
    Then the following holds.
    \begin{enumerate}
        \item \( \codim \kr{1}A  = \ell(A) \).
            \label{item: Brauer}
        \item \( \codim \kr{2}A  = \ell(A) + \sum_\id \dim \Ext1(S_i, S_i) \).
            \label{item: Okuyama}
        \item \( \ell(A) + \sum_\id \dim \Ext1(S_i, S_i) \le k(A) \le \tr \cartan{A} \).
            \label{item: bound for k}
    \end{enumerate}
\end{thm}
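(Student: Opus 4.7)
My plan is to push each codimension computation into a simpler quotient algebra. For \ref{item: Brauer}, the surjection $A \to A/\rad{1}{A}$ sends $\komm{A}$ onto the commutator subspace of the quotient, so $\codim \kr{1}{A} = k(A/\rad{1}{A})$. Since $F$ is a splitting field, $A/\rad{1}{A} \cong \prod_{i=1}^{\ell(A)} M_{n_i}(F)$ with $n_i = \dim S_i$, and the commutator subspace of $M_n(F)$ is the trace-zero subspace of codimension $1$; summing then yields $\ell(A)$.

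For \ref{item: Okuyama}, the same observation reduces the claim to computing $k(B)$ for $B := A/\rad{2}{A}$. Using a Wedderburn--Malcev lift, write $B = S \oplus J$ as $F$-vector spaces, where $S \cong A/\rad{1}{A}$ is a subalgebra and $J := \rad{1}{A}/\rad{2}{A}$ satisfies $J^2 = 0$. Expanding $[s_1 + j_1, s_2 + j_2]$ and discarding the term in $J^2 = 0$ gives $\komm{B} = \komm{S} \oplus [S, J]$, where $[S, J]$ denotes the $F$-span of the Lie brackets; hence $k(B) = \ell(A) + \codim_J [S, J]$. To evaluate $\codim_J [S, J]$, decompose the $S$-bimodule $J = \bigoplus_{i, j} e_i J e_j$, where $e_i \in S$ is the central idempotent corresponding to $S_i$. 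For $i \ne j$, any $m \in e_i J e_j$ equals $[e_i, m]$, so these components lie entirely in $[S, J]$; for $i = j$, the $(M_{n_i}(F), M_{n_i}(F))$-bimodule $e_i J e_i$ is a sum of $d_{ii}$ copies of the unique simple bimodule, and the Lie image has codimension $1$ in each copy (cut out by the trace), contributing $d_{ii}$. Finally, choosing primitive orthogonal idempotents $\{f_\alpha\}$ with $f_\alpha A / f_\alpha \rad{1}{A} \cong S_{\phi(\alpha)}$, the standard identity $\dim f_\alpha J f_\beta = \dim \Ext{1}(S_{\phi(\alpha)}, S_{\phi(\beta)})$ combined with $\dim e_i J e_i = n_i^2 d_{ii}$ and $\#\{\alpha : \phi(\alpha) = i\} = n_i$ identifies $d_{ii}$ with $\dim \Ext{1}(S_i, S_i)$.

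For \ref{item: bound for k}, the lower bound is immediate from \ref{item: Okuyama} and the inclusion $\komm{A} \subseteq \kr{2}{A}$. For the upper bound, reuse the idempotents $\{f_\alpha\}$: for $\alpha \ne \beta$, the cyclic identity $\overline{f_\alpha a f_\beta} = \overline{(a f_\beta) f_\alpha} = 0$ in $A/\komm{A}$ (using $f_\beta f_\alpha = 0$) shows that the natural map $\bigoplus_\alpha f_\alpha A f_\alpha \to A/\komm{A}$ is surjective. Whenever $\phi(\alpha) = \phi(\alpha')$, choose $u \in f_\alpha A f_{\alpha'}$ and $v \in f_{\alpha'} A f_\alpha$ with $uv = f_\alpha$ and $vu = f_{\alpha'}$ (extracted from an isomorphism $f_\alpha A \cong f_{\alpha'} A$); the chain $\overline{vxu} = \overline{xuv} = \overline{x f_\alpha} = \overline{x}$ then shows $f_\alpha A f_\alpha$ and $f_{\alpha'} A f_{\alpha'}$ have the same image in $A/\komm{A}$. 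Picking one representative $\alpha_i$ per $\phi$-fiber yields a surjection $\bigoplus_{i=1}^{\ell(A)} f_{\alpha_i} A f_{\alpha_i} \to A/\komm{A}$, and since $\dim f_{\alpha_i} A f_{\alpha_i} = [f_{\alpha_i} A : S_i] = c_{ii}$, we conclude $k(A) \le \sum_i c_{ii} = \tr \cartan{A}$. The main obstacle is the bimodule calculation in \ref{item: Okuyama}: correctly identifying $d_{ii}$ with $\dim \Ext{1}(S_i, S_i)$ requires careful idempotent bookkeeping and crucially uses the splitting hypothesis.
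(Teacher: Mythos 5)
Your proof is correct, but it takes a genuinely different route from the paper for parts (i) and (ii). The paper first reduces to the basic algebra via its Morita-invariance theorem and then works with the Peirce decomposition inside \( A \) itself: the key step is the inclusion \( \komm{A} \subseteq \aCyc(A) + \Cyc{{}\ge 2}{A} \), which combined with a general surjectivity lemma gives equality \( \codim \kr{n}{A} = \sum_i \dim e_iAe_i/e_i\rad{n}{A}e_i \) for \( n \le 2 \). You instead pass to the quotients \( A/\rad{1}{A} \) and \( A/\rad{2}{A} \) (legitimate, since a surjective algebra map carries the commutator subspace onto that of the image), invoke Wedderburn--Malcev to split off the semisimple part, and compute \( \codim_J[S,J] \) by decomposing \( J \) into simple \( S \)-bimodules; your identification of \( d_{ii} \) with \( \dim\Ext1(S_i,S_i) \) by counting \( \dim e_iJe_i = n_i^2 d_{ii} \) against the \( n_i^2 \) pairs \( (f_\alpha, f_\beta) \) over the fiber is a clean way to avoid reducing to the basic algebra. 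For the upper bound in (iii), the paper gets \( k(A) \le \tr\cartan{A} \) from its Otokita-type bound with \( n \) large (again after reduction to the basic case), whereas you run the same cyclic-permutation surjectivity argument directly on a non-basic algebra and handle repeated idempotents with the \( uv = f_\alpha \), \( vu = f_{\alpha'} \) trick; this is essentially the same idea executed without Morita reduction. What the paper's approach buys is the stronger intermediate machinery (the bound for all \( n \) and the equivalence in its Proposition~\ref{prop: equal iff}), which is reused later for the characterization of truncated polynomial algebras; what yours buys is a more self-contained argument for \( n \le 2 \) that does not depend on Theorem~\ref{thm: Morita invariance}. Both arguments use the splitting hypothesis in the same essential places (commutators of \( M_n(F) \) have codimension one, and \( \dim S_j f_\alpha = \delta_{j,\phi(\alpha)} \) in the Cartan count).
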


This paper is organized as follows.
First, Section~\ref{sec: Morita invariance} is devoted to prove Morita invariance (Theorem~\ref{thm: Morita invariance}).
In Section~\ref{sec: upper bound}, we establish an upper bound for the codimension of \( \kr{n}A \) in Theorem~\ref{thm: Otokita} and prove Theorem~\ref{thm: Brauer Okuyama}.
Finally, Section~\ref{sec: small algebras} provides a characterization of truncated polynomial algebras (Theorem~\ref{thm: truncated poly. ring}) using Theorems~\ref{thm: Brauer Okuyama} and \ref{thm: Morita invariance}.
Then characterizations of small algebras (Theorems~\ref{thm: k = 1} and \ref{thm: k = 2 and l = 1}) follow immediately.

\section{Morita invariance}
\label{sec: Morita invariance}
This section is devoted to prove Morita invariance of the codimension of \kr{n}{A}.
See also Sakurai~\cite[Theorem~2.3]{Sak17} for its ``dual''.

\begin{thm}
    \label{thm: Morita invariance}
    Let \( A \) and \( B \) be Morita equivalent finite-dimensional algebras over a field \( F \).
    Then there is an \( F \)-linear isomorphism
    \( A/\komm{A} \to B/\komm{B}  \)
    inducing the \( F \)-linear isomorphism
    \( A/\kr{n}A  \to B/\kr{n}B \)
    for every \( n \in \nat \).
\end{thm}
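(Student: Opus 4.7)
The plan is to reduce any Morita equivalence to a composition of two elementary operations: \( A \rightsquigarrow M_m(A) \) (passage to a matrix algebra) and \( A \rightsquigarrow eAe \) (passage to a corner algebra by a \emph{full} idempotent, i.e., one with \( AeA = A \)). Every Morita equivalence between finite-dimensional \( F \)-algebras factors in this way, so it suffices to verify the theorem in these two special cases.

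For the matrix case \( B = M_m(A) \), the ordinary trace \( \tr \colon M_m(A) \to A \) is \( F \)-linear, vanishes on \( \komm{M_m(A)} \) because \( \tr(XY) = \tr(YX) \), and so descends to a map \( \overline{\tr} \colon M_m(A)/\komm{M_m(A)} \to A/\komm{A} \). Standard matrix-unit identities such as \( [aE_{11}, E_{1j}] = a E_{1j} \) for \( j \neq 1 \) and \( [E_{1j}, a E_{j1}] = a E_{11} - a E_{jj} \) show that \( \overline{\tr} \) has two-sided inverse \( a + \komm{A} \mapsto aE_{11} + \komm{M_m(A)} \). Because \( \radikal(M_m(A)) = M_m(\radikal(A)) \) gives \( \rad{n}{M_m(A)} = M_m(\rad{n}{A}) \), the trace sends \( \kr{n}{M_m(A)} \) onto \( \kr{n}{A} \), and so the induced map \( M_m(A)/\kr{n}{M_m(A)} \to A/\kr{n}{A} \) is an isomorphism as required.

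For the corner case \( B = eAe \) with \( AeA = A \), fix a decomposition \( 1 = \sum_{k} p_{k} e q_{k} \). I define \( \iota \colon eAe \hookrightarrow A \) to be the inclusion, and \( \varphi \colon A \to eAe \) by \( \varphi(a) = \sum_{k} eq_{k} \cdot a \cdot p_{k} e \). A direct cyclic rearrangement using \( xy \equiv yx \pmod{\komm{A}} \) yields \( \iota \circ \varphi \equiv \identity_{A} \pmod{\komm{A}} \), so \( \bar\iota \) is surjective on \( A/\komm{A} \). The main obstacle is the reverse composition: one must show \( \varphi \circ \iota \equiv \identity_{eAe} \pmod{\komm{eAe}} \), i.e.\ that \( \sum_{k} eq_{k} y p_{k} e \equiv y \pmod{\komm{eAe}} \) for every \( y \in eAe \); the naive cyclic rearrangement fails because \( eq_{k} \notin eAe \) in general, so a second insertion of \( 1 = \sum_{l} p_{l} e q_{l} \) is needed to split each \( eq_{k} \) into an \( eAe \)-product before the cyclic identities of \( \komm{eAe} \) can be invoked.

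Finally, both \( \iota \) and \( \varphi \) respect the radical filtration: one checks \( \rad{n}{eAe} = e\,\rad{n}{A}\,e \), where the nontrivial inclusion again uses \( AeA = A \) to telescope an \( n \)-fold product of elements of \( \radikal(A) \) into an \( n \)-fold product of elements of \( e\radikal(A)e = \radikal(eAe) \). Combined with the isomorphism of the previous paragraph, this ensures that \( \bar\iota \) and \( \bar\varphi \) descend further to mutually inverse \( F \)-linear isomorphisms \( eAe/\kr{n}{eAe} \leftrightarrow A/\kr{n}{A} \), completing the proof.
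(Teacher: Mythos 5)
Your argument is correct, and its core --- the corner map \( \varphi(a) = \sum_k e q_k a p_k e \) together with the double insertion of \( 1 = \sum_l p_l e q_l \) showing \( \varphi(\komm{A}) \subseteq \komm{eAe} \) --- is exactly the paper's map \( \tau \). The difference lies in the reduction: the paper replaces \( A \) and \( B \) by their basic algebras (which are isomorphic), so only the single case \( B = eAe \) with \( e \) full is needed, whereas you factor the equivalence through \( M_m(A) \) and a full corner of it, which forces the extra matrix-algebra step via the trace. Your route is more self-contained (no appeal to uniqueness of the basic algebra) at the price of that extra computation. Two small corrections. First, \( \tr(XY) = \tr(YX) \) is false over a noncommutative coefficient ring; what you actually need is \( \tr(XY) - \tr(YX) = \sum_{i,j} [X_{ij}, Y_{ji}] \in \komm{A} \), which is what makes \( \overline{\tr} \) well defined into \( A/\komm{A} \). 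Second, the ``second insertion'' you call for in proving \( \varphi \circ \iota \equiv \identity_{eAe} \) is unnecessary: for \( y \in eAe \) one has \( y = eye \), so \( e q_k y p_k e = (e q_k e)\, y\, (e p_k e) \) with both parenthesized factors already in \( eAe \); a single cyclic rearrangement modulo \( \komm{eAe} \) then gives \( \sum_k (e q_k e) y (e p_k e) \equiv y \sum_k e p_k e q_k e = y e = y \). Your treatment of the radical filtration (\( \rad{n}{M_m(A)} = M_m(\rad{n}{A}) \) and \( e \rad{n}{A} e = \rad{n}{eAe} \), the latter telescoped using fullness) is precisely the well-definedness check that the paper leaves to the reader.
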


\begin{proof}
    It suffices to prove the case for \( A \) and its basic algebra \( B \coloneqq eAe \)
    because Morita equivalent basic algebras are isomorphic.
    Since the basic idempotent \( e \in A \) is full, there exist elements \( u_k, v_k \in A \) such that
    \begin{equation}
        \label{eq: 1 = ...}
        1 = \sum_k u_k e v_k.
    \end{equation}
    Then define \( F \)-linear maps
    \begin{equation*} \begin{tikzcd}
        A/\komm{A} \arrow[r, "\tau",   shift left]
        &
        B/\komm{B} \arrow[l, "\sigma", shift left]
    \end{tikzcd} \end{equation*}
    by
    \begin{align*}
        \tau  \big(a + \komm{A}\big) &= \sum_k e v_k a u_k e + \komm{B} & &(a \in A)
        \\
        \sigma\big(b + \komm{B}\big) &= b                    + \komm{A} & &(b \in B).
    \end{align*}
    We claim that \( \tau \) and \( \sigma \) are well-defined and mutually inverse.
    Indeed, for \( x, y \in A \) we have
    \begin{align*}
        \sum_k e v_k (xy - yx) u_k e
            &= \sum_k e v_k xy u_k e
                - \sum_l e v_l yx u_l e
            \\
            &= \sum_{k,l} e v_k x u_l e v_l y u_k e
                - \sum_{l,k} e v_l y u_k e v_k x u_l e
            \\
            &= \sum_{k,l} \big((e v_k x u_l e)(e v_l y u_k e)
                - (e v_l y u_k e)(e v_k x u_l e)\big)
            \\
            &\in \komm{B}.
    \end{align*}
    Hence \( \tau \) is well-defined.
	It is routine to check the other parts.

    These linear isomorphisms induce linear maps \( \tau_n \) and \( \sigma_n \) commuting the following diagram
    where \( \pi_{n, \blank} \) denotes the canonical epimorphism.
    \begin{equation*} \begin{tikzcd}
        A/\komm{A} \arrow[r, "\tau",     shift left] \arrow[d, "\pi_{n, A}", two heads]
        &
        B/\komm{B} \arrow[l, "\sigma",   shift left] \arrow[d, "\pi_{n, B}", two heads]
        \\
        A/\kr{n}A  \arrow[r, "\tau_n",   shift left, dashed]
        &
        B/\kr{n}B  \arrow[l, "\sigma_n", shift left, dashed]
    \end{tikzcd} \end{equation*}

    These maps are well-defined and
    diagram chase reveals that these maps are mutually inverse
    since \( \pi_{n, \blank} \) is an epimorphism.
\end{proof}

\section{Upper bound}
\label{sec: upper bound}
In this section we establish an upper bound for the codimension of \kr{n}{A},
which extends the result of Otokita~\cite{Oto16} (Theorem~\ref{thm: Otokita}).
As a corollary we prove Theorem~\ref{thm: Brauer Okuyama}.

\begin{nts}
    For a basic\footnote{See \cite[p.~305]{AF92}.} set of primitive idempotents
    \( \{\, e_i \mid \id \,\}\)
    of \( A \), set
    \begin{align*}
        \aCyc(A)
        \coloneqq
        \sum_{\substack{1 \le i, j \le \ell(A)\\ i \neq j}} e_i A e_j
        \quad \text{and} \quad
        \Cyc{{} \ge n}{A}
        \coloneqq
        \sum_\id                                            e_i \rad{n}{A} e_i.
    \end{align*}
\end{nts}

\begin{lem}
    \label{lem: rewrite ubd}
    If \( A \) is basic then
    \begin{equation*}
        \sum_\id \dim e_i A e_i / e_i \rad{n}{A} e_i
        =
        \codim \big( \aCyc(A) + \Cyc{{} \ge n}{A} \big).
    \end{equation*}
\end{lem}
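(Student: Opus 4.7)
The plan is to reduce this to the Peirce decomposition of $A$ with respect to the basic set of primitive idempotents $\{e_i \mid \id\}$. Since these idempotents are pairwise orthogonal and satisfy $1 = \sum_i e_i$, we have the direct sum decomposition
\begin{equation*}
    A = \bigoplus_{i,j} e_i A e_j
    = \Bigl(\bigoplus_{i \ne j} e_i A e_j\Bigr) \oplus \Bigl(\bigoplus_i e_i A e_i\Bigr).
\end{equation*}
The first summand is by definition $\aCyc(A)$, and each piece of the second is a subalgebra containing $e_i \rad{n}{A} e_i$ as an $F$-subspace.

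Next, I would observe that $\Cyc{{} \ge n}{A} = \bigoplus_i e_i \rad{n}{A} e_i$ sits entirely inside the diagonal summand $\bigoplus_i e_i A e_i$, so the sum $\aCyc(A) + \Cyc{{} \ge n}{A}$ is in fact a direct sum and splits along the Peirce decomposition:
\begin{equation*}
    \aCyc(A) + \Cyc{{} \ge n}{A}
    = \Bigl(\bigoplus_{i \ne j} e_i A e_j\Bigr) \oplus \Bigl(\bigoplus_i e_i \rad{n}{A} e_i\Bigr).
\end{equation*}
Taking the quotient against $A$ kills the off-diagonal part entirely and leaves
\begin{equation*}
    A / \bigl(\aCyc(A) + \Cyc{{} \ge n}{A}\bigr)
    \;\cong\; \bigoplus_i e_i A e_i / e_i \rad{n}{A} e_i
\end{equation*}
as $F$-vector spaces. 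Passing to dimensions yields the claimed equality.

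There is essentially no obstacle: the only point requiring a line of verification is that the containment $e_i \rad{n}{A} e_i \subseteq e_i A e_i$ is genuine (clear, as $\rad{n}{A} \subseteq A$), so the sum is direct and the quotient decomposes blockwise. The identity $1 = \sum_i e_i$ for a basic set of primitive idempotents is what makes the Peirce decomposition available, and this is the only structural input needed beyond unwinding definitions.
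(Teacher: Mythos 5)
Your proof is correct and takes essentially the same route as the paper's: both arguments rest on the Peirce decomposition \( A = \bigoplus_{i,j} e_i A e_j \) (available because \( A \) is basic, so \( 1 = \sum_i e_i \)) together with the observation that \( \aCyc(A) + \Cyc{{} \ge n}{A} \) splits along it, so the quotient decomposes blockwise and the dimensions add up.
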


\begin{proof}
    \begin{align*}
        &\sum_i \dim e_i A e_i / e_i \rad{n}{A} e_i
        \\
        &\qquad=
        \dim \sum_i  e_i A e_i \big/ \sum_i e_i \rad{n}{A} e_i
        \\
        &\qquad=
        \dim
            \sum_{i, j}  e_i A e_j
            \bigg/
            \bigg( \sum_{i \neq j} e_i A e_j + \sum_i e_i \rad{n}{A} e_i \bigg)
        \\
        &\qquad= 
        \codim \big( \aCyc(A) + \Cyc{{} \ge n}{A} \big).
        \qedhere
    \end{align*}
\end{proof}

\begin{prop}
    \label{prop: equal iff}
    If \( A \) is basic then the following statements are equivalent.
    \begin{enumerate}
        \item
            \(
                \codim \kr{n}A
                =
                \sum_\id \dim e_i A e_i / e_i \rad{n}{A} e_i.
            \)
            \label{item: equality holds}

        \item
            \(
                \komm{A}
                \subseteq
                \aCyc(A) + \Cyc{{} \ge n}{A}.
            \)
            \label{item: K in aCyc + Cyc}
    \end{enumerate}
\end{prop}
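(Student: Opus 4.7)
The plan is to use Lemma~\ref{lem: rewrite ubd} to rewrite statement (i) as the codimension equality $\codim \kr{n}A = \codim(\aCyc(A) + \Cyc{{} \ge n}{A})$, and then leverage a one-sided containment $\aCyc(A) + \Cyc{{} \ge n}{A} \subseteq \kr{n}A$ (which I will establish first) to reduce the equivalence to elementary bookkeeping.

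First I would verify two Peirce-type inclusions using the basic set of orthogonal primitive idempotents $1 = \sum_i e_i$. For $i \neq j$, orthogonality $e_j e_i = 0$ gives
\begin{equation*}
    e_i a e_j = e_i (e_i a e_j) - (e_i a e_j) e_i \in \komm{A},
\end{equation*}
so $\aCyc(A) \subseteq \komm{A}$. Next, the Peirce decomposition of $\rad{n}{A}$ reads
\begin{equation*}
    \rad{n}{A} = \sum_{i\neq j} e_i \rad{n}{A} e_j + \sum_i e_i \rad{n}{A} e_i \subseteq \aCyc(A) + \Cyc{{} \ge n}{A},
\end{equation*}
since the off-diagonal summands land in $e_i A e_j \subseteq \aCyc(A)$. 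Combining yields the containment
\begin{equation*}
    \aCyc(A) + \Cyc{{} \ge n}{A} \subseteq \komm{A} + \rad{n}{A} = \kr{n}A.
\end{equation*}

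Second, by Lemma~\ref{lem: rewrite ubd}, statement (i) is equivalent to $\codim \kr{n}A = \codim(\aCyc(A) + \Cyc{{} \ge n}{A})$. Given the containment just established between two finite-codimensional subspaces, the codimensions agree if and only if the two subspaces are equal, which in turn is equivalent to the reverse inclusion $\kr{n}A \subseteq \aCyc(A) + \Cyc{{} \ge n}{A}$.

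Finally I would break this reverse inclusion into its two summands: since we already know $\rad{n}{A} \subseteq \aCyc(A) + \Cyc{{} \ge n}{A}$ unconditionally, the reverse inclusion reduces to $\komm{A} \subseteq \aCyc(A) + \Cyc{{} \ge n}{A}$, which is (ii). The only substantive input is the first step — the off-diagonal commutator identity — and even that is a standard Peirce trick rather than a genuine obstacle; the rest is linear-algebraic accounting.
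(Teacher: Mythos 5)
Your proof is correct and follows essentially the same route as the paper's: both rest on the unconditional inclusion $\aCyc(A) + \Cyc{{} \ge n}{A} \subseteq \kr{n}A$, Lemma~\ref{lem: rewrite ubd}, and the Peirce decomposition of $\rad{n}{A}$ to absorb the radical into $\aCyc(A) + \Cyc{{} \ge n}{A}$. The only difference is cosmetic — you organize the argument as a single chain of equivalences and spell out the commutator identity behind the inclusion $\aCyc(A) \subseteq \komm{A}$, which the paper merely asserts.
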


\begin{proof}
    First, note that
    \begin{equation}
        \label{eq: trivial inclusion}
        \kr{n}A
        \supseteq
        \aCyc(A) + \Cyc{{} \ge n}{A}.
    \end{equation}

    \ref{item: equality holds} \( \implies \) \ref{item: K in aCyc + Cyc}:
    By \eqref{eq: trivial inclusion} and Lemma~\ref{lem: rewrite ubd}, we have \( \komm{A} \subseteq \kr{n}A = \aCyc(A) + \Cyc{{} \ge n}{A} \).

    \ref{item: K in aCyc + Cyc} \( \implies \) \ref{item: equality holds}:
    By the hypothesis, we have the following.
    \begin{align*}
        \kr{n}A
        &\subseteq
        \aCyc(A) + \Cyc{{} \ge n}{A} + \rad{n}A
        \\
        &=
        \aCyc(A) + \sum_{i \neq j} e_i \rad{n}{A} e_j + \Cyc{{} \ge n}{A} + \sum_{i} e_i \rad{n}{A} e_i
        \\
        &=
        \aCyc(A) + \Cyc{{} \ge n}{A}.
    \end{align*}
	Then
    \( \kr{n}A = \aCyc(A) + \Cyc{{} \ge n}{A} \)
    by \eqref{eq: trivial inclusion}.
    Thus, by Lemma~\ref{lem: rewrite ubd}, we have
    \begin{align*}
        \codim \kr{n}A
        &= \codim \left( \aCyc(A) + \Cyc{{} \ge n}{A} \right)
		\\
        &= \sum_i \dim e_iAe_i/e_i\rad{n}Ae_i.
        \qedhere
	\end{align*}
\end{proof}

\begin{thm}[See Otokita~\cite{Oto16} for blocks]
    \label{thm: Otokita}
    Let
    \( \{\, e_i \mid 1 \le i \le \ell(A) \,\} \)
    be a basic set of primitive idempotents of \( A \).
    Then the following holds for every \( n \in \nat \).
    \begin{equation*}
        \codim \kr{n}A \le \sum_\id \dim e_i A e_i / e_i \rad{n}{A} e_i
    \end{equation*}

    Furthermore, if the equality holds for some \( n \) then so does for every \( 1 \le m \le n \)\textup{;}
    if the equality does not hold for some \( n \) then so does not for every \( m \ge n \).
\end{thm}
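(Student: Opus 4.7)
The plan is to reduce immediately to the basic case and then combine the two preparatory results. Both sides of the inequality are Morita invariant: the left-hand side by Theorem~\ref{thm: Morita invariance}, and the right-hand side because, writing $e = \sum_i e_i$ and $B = eAe$, the idempotents $e_i$ remain primitive in $B$ and satisfy $e_i A e_i = e_i B e_i$ together with $e_i \rad{n}{A} e_i = e_i \rad{n}{B} e_i$ (since $\radikal(B) = e\,\radikal(A)\,e$). So the first step is to record this reduction and assume $A$ is basic throughout.

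Under this assumption the inequality is essentially free. The trivial inclusion \eqref{eq: trivial inclusion} gives $\kr{n}A \supseteq \aCyc(A) + \Cyc{{} \ge n}{A}$, and taking codimensions yields
\[
    \codim \kr{n}A
    \;\le\; \codim\bigl(\aCyc(A) + \Cyc{{} \ge n}{A}\bigr),
\]
while Lemma~\ref{lem: rewrite ubd} identifies the right-hand side with $\sum_\id \dim e_i A e_i / e_i \rad{n}{A} e_i$, which is the asserted bound.

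For the monotonicity clause I would invoke Proposition~\ref{prop: equal iff}, which recasts equality at level $n$ as the single inclusion $\komm{A} \subseteq \aCyc(A) + \Cyc{{} \ge n}{A}$. Since $\rad{n}{A} \subseteq \rad{m}{A}$ whenever $m \le n$, one has $\Cyc{{} \ge n}{A} \subseteq \Cyc{{} \ge m}{A}$, so the subspaces $\aCyc(A) + \Cyc{{} \ge n}{A}$ form a chain that only grows as $n$ decreases. Hence if the inclusion characterizing equality holds at some $n$, it automatically persists at every smaller $m$, and the second ``furthermore'' assertion is the contrapositive of the first.

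The main obstacle, if one may be pinpointed, is the bookkeeping in the Morita reduction: one must verify carefully that a basic set of primitive idempotents of $A$ is genuinely a basic set of primitive idempotents of the basic algebra $B = eAe$, and that the individual summands on the right-hand side are unaffected by passing between the two algebras. Once this verification is in place, both the inequality and the monotonicity of equality fall out directly from Lemma~\ref{lem: rewrite ubd} and Proposition~\ref{prop: equal iff}, with no further computation required.
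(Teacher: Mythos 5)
Your proof is correct and follows essentially the same route as the paper: reduction to the basic case via Theorem~\ref{thm: Morita invariance}, and the monotonicity clause via Proposition~\ref{prop: equal iff}. The only cosmetic difference is that you obtain the inequality from the inclusion \eqref{eq: trivial inclusion} together with Lemma~\ref{lem: rewrite ubd}, whereas the paper constructs an explicit surjection \( \bigoplus_i e_i A e_i / e_i \rad{n}{A} e_i \to A/\kr{n}A \); both arguments rest on the same observation that the off-diagonal components \( e_i a e_j \) (\( i \neq j \)) lie in \( \komm{A} \).
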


\begin{proof}
    By Theorem~\ref{thm: Morita invariance}, we can assume \( A \) is basic.
    We claim the map
    \begin{align} \begin{split}
        \pi \colon
            &\bigoplus_\id     e_i A e_i / e_i \rad{n}{A} e_i      \to                  A/\kr{n}A,
            \\
            &\quad \sum_i       \big(a_i + e_i \rad{n}{A} e_i\big) \mapsto \sum_i   a_i + \kr{n}A
    \end{split} \end{align}
    is surjective.
    Let \( a + \kr{n}A \in A/\kr{n}A \).
    Then we have the following.
    \begin{align*}
        a + \kr{n}A
            &= \sum_i e_i a e_i + \kr{n}A
                + \sum_{i \neq j} e_i a e_j + \kr{n}A
            \\
            &= \sum_i e_i a e_i + \kr{n}A
                + \sum_{i \neq j} (e_i a e_j - e_j e_i a) + \kr{n}A
            \\
            &= \sum_i e_i a e_i + \kr{n}A
            \\
            &= \pi\bigg(\sum_i e_i a e_i + e_i \rad{n}{A} e_i\bigg).
    \end{align*}
    Hence the proof of the inequality completes.

    The latter statements follow from Proposition~\ref{prop: equal iff}.
\end{proof}

\begin{proof}[Proof of Theorem~\textup{\ref{thm: Brauer Okuyama}}]
    Since the last statement \ref{item: bound for k} is clear from  \ref{item: Okuyama} and Theorem~\ref{thm: Otokita},
    we prove \ref{item: Brauer} and \ref{item: Okuyama} in the following.

    We first claim that equality holds in Theorem~\ref{thm: Otokita} for \( n \le 2 \).
    By Theorem~\ref{thm: Morita invariance},
    we may assume that \( A \) is basic.
    Then we have
     \begin{align*}
        \komm{A}
        &=
        \sum_{i, j, s, t} [e_i A e_j, e_s A e_t]
        & &
        \\
        &\subseteq
        \aCyc(A) + \sum_{i, j} [e_i A e_j, e_j A e_i]
        & &
        \\
        &=
        \aCyc(A) + \sum_{i \neq j} [e_i A e_j, e_j A e_i] + \sum_i \komm{e_i A e_i}
        & &
        \\
        &\subseteq 
        \aCyc(A) + \Cyc{{} \ge 2}{A} + \sum_i \komm{e_i A e_i}
        \\
        &\subseteq
        \aCyc(A) + \Cyc{{} \ge 2}{A} + \sum_i \rad{2}{e_i A e_i}
        & & 
        \\
        &\subseteq
        \aCyc(A) + \Cyc{{} \ge 2}{A}.
        & &
    \end{align*}
    Now the claim follows from Proposition~\ref{prop: equal iff}.
    Then we have the following.
    \begin{align*}
        \codim \kr{1}{A}
            &= \sum_i \dim e_i A e_i / e_i \radikal(A) e_i
            \\
            &= \ell(A);
            \\
        \codim \kr{2}{A}
            &= \sum_i \dim e_i A e_i / e_i \rad{2}{A} e_i
            \\
            &= \sum_i \dim e_i A e_i / e_i \radikal(A) e_i
            \\
            &\qquad\quad
              + \sum_i \dim e_i \radikal(A) e_i / e_i \rad{2}{A} e_i
            \\
            &= \ell(A) + \sum_i \dim \Ext1(S_i, S_i).
            \qedhere
    \end{align*}
\end{proof}

\begin{note}
	\label{note: about theorem}
    \ref{item: Brauer} If \( F \) is an algebraically closed field of positive characteristic \( p \),
    a well-known theorem of Brauer~\cite[(3A)]{Bra56} states
    \begin{equation*}
        \ell(A) = \codim T(A)
    \end{equation*}
    where
    \begin{equation*}
        T(A) \coloneqq \{\, a \in A \mid \text{\( a^{p^n} \in \komm{A} \) for some \( n \in \nat \)} \,\}.
    \end{equation*}
    Since \( T(A) = \kr{1}A \) by K\"ulshammer~\cite[Lemma~B]{Kue81},
    Theorem~\ref{thm: Brauer Okuyama}\ref{item: Brauer} follows in this case.
    In fact, the part \ref{item: Brauer} is certainly not our contribution although is is sometimes stated only for positive characteristic case;
    It can be found, among others, in \cite[Proposition~I.13.3(i)]{Lan83}.

    \ref{item: Okuyama} Okuyama~\cite{Oku81} proves essentially the same statement of Theorem~\ref{thm: Brauer Okuyama}\ref{item: Okuyama} for a block of a finite group algebra over an algebraically closed field of positive characteristic.
    (See also Koshitani~\cite{Kos16}, which is written in English.)

    \ref{item: bound for k} The inequality \( \ell(A) + \sum_\id \dim \Ext1(S_i, S_i) \le k(A) \) is an extension of Brandt~\cite[Theorem~B]{Bra82}.
    (For a symmetric algebra, it can be proved that the equality holds if and only if its Loewy length is at most two.
    See also \cite[Lemma 2.1]{Sak17}.)
	For a block \( B \) of a finite group algebra, the inequality \( k(B) \le \tr \cartan{B} \) is a direct consequence of
	\( \cartan{B} = {}^tD_B \cdot D_B \)
	where \( D_B \) is the decomposition matrix of \( B \).
	See also K\"ulshammer-Wada~\cite{KW02} for refinements in this case.
\end{note}

\begin{cor}
    \label{cor: rad. sq. zero}
    For a radical square zero algebra \( A \) over a splitting field, we have \( k(A) = \tr \cartan{A} \).
\end{cor}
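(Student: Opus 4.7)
The plan is to combine the equality case of the Okuyama half of Theorem~\ref{thm: Brauer Okuyama} with the collapse \( \kr{2}A = \komm{A} \) that the radical-square-zero hypothesis forces.

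Since both \( k(A) \) and \( \tr \cartan{A} \) are Morita invariant --- by Theorem~\ref{thm: Morita invariance} and the classical Morita invariance of the Cartan matrix (up to reordering of the simples) respectively --- I would first reduce to the case where \( A \) is basic and fix a basic set of primitive idempotents \( \{\, e_i \mid \id \,\} \). The hypothesis \( \rad{2}{A} = 0 \) immediately gives \( \kr{2}A = \komm{A} + \rad{2}{A} = \komm{A} \), so \( k(A) = \codim \komm{A} = \codim \kr{2}A \).

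Next I would invoke the claim established inside the proof of Theorem~\ref{thm: Brauer Okuyama}, namely that equality holds in Otokita's bound (Theorem~\ref{thm: Otokita}) at \( n = 2 \). Combined with \( \rad{2}{A} = 0 \), this yields
\[
    k(A) = \codim \kr{2}A = \sum_\id \dim e_i A e_i / e_i \rad{2}{A} e_i = \sum_\id \dim e_i A e_i.
\]
The remaining step is the standard identification \( \tr \cartan{A} = \sum_\id \dim e_i A e_i \) for a basic algebra over a splitting field, which follows from \( \operatorname{Hom}_A(e_i A, e_i A) \cong e_i A e_i \) together with the composition-factor interpretation \( c_{ii} = [e_i A : S_i] \). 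This closes the chain of equalities \( k(A) = \sum_\id \dim e_i A e_i = \tr \cartan{A} \).

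There is no real obstacle here: the radical-square-zero hypothesis simultaneously kills the \( \rad{2}{A} \) summand of \( \kr{2}A \) and the denominator \( e_i \rad{2}{A} e_i \) in Otokita's sum, so the corollary follows at once from Theorem~\ref{thm: Brauer Okuyama} together with the standard Cartan-matrix dictionary.
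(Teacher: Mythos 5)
Your proof is correct and takes essentially the same route as the paper: both arguments rest on the equality case of Theorem~\ref{thm: Otokita} at \( n = 2 \) (equivalently, Theorem~\ref{thm: Brauer Okuyama}\ref{item: Okuyama}) together with the identification \( \tr \cartan{A} = \sum_\id \dim e_i A e_i \). The only cosmetic difference is that you note \( \kr{2}A = \komm{A} \) and write a single chain of equalities, whereas the paper sandwiches \( k(A) \) between the two bounds of Theorem~\ref{thm: Brauer Okuyama}\ref{item: bound for k}; the underlying computation is identical.
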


\begin{proof}
    It suffices to prove \( k(A) \ge \tr \cartan{A} \) by Theorem~\ref{thm: Brauer Okuyama}\ref{item: bound for k}.
    \begin{align*}
        k(A)
        &\ge
        \ell(A) + \sum_i \dim \Ext1(S_i, S_i)
        \qquad \text{(By Theorem~\ref{thm: Brauer Okuyama}\ref{item: bound for k})}
        \\
        &=
        \ell(A) + \sum_i \dim e_i\radikal(A)e_i
        \qquad \text{(By \( \rad{2}A = 0 \))}
        \\
        &=
        \sum_i (1 + \dim e_i\radikal(A)e_i)
		\\
        &=
        \sum_i \dim e_iAe_i
        = \tr \cartan{A}.
        \qedhere
    \end{align*}
\end{proof}

\section{Small algebras}
\label{sec: small algebras}

This section provides a characterization of truncated polynomial algebras \( F[X]/(X^n) \) using Theorems~\ref{thm: Brauer Okuyama} and \ref{thm: Morita invariance}.
Then characterizations of small algebras (Theorems~\ref{thm: k = 1} and \ref{thm: k = 2 and l = 1}) follow immediately.

\begin{lem}
    \label{lem: basic local Nakayama}
    Let \( n \in \nat \).
    Then the following statements are equivalent.
    \begin{enumerate}
        \item \( A \) is isomorphic to \( F[X]/(X^n) \).
        \item \( A \) is \( n \)-dimensional basic local Nakayama algebra.
    \end{enumerate}
\end{lem}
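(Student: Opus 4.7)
The plan is to check (i) $\Rightarrow$ (ii) by inspection and to prove (ii) $\Rightarrow$ (i) by picking a radical generator $x$ and showing $\{1, x, \dots, x^{n-1}\}$ is an $F$-basis of $A$ with $x^n = 0$, so that the substitution homomorphism $F[X] \to A$, $X \mapsto x$, factors through an isomorphism $F[X]/(X^n) \cong A$.

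For (i) $\Rightarrow$ (ii), the algebra $F[X]/(X^n)$ has $F$-basis $\{1, X, \dots, X^{n-1}\}$, is local with maximal ideal $(X)/(X^n)$ and residue field $F$ (hence basic), and has uniserial regular module via the composition series $A \supset (X) \supset (X^2) \supset \dots \supset (X^n) = 0$; the analogous argument on the left shows $A$ is Nakayama.

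For (ii) $\Rightarrow$ (i), write $J = \radikal(A)$, so $A/J \cong F$ by the basic local hypothesis. Since $A$ is local Nakayama, $A_A$ is uniserial, and the radical filtration
\[
A \supset J \supset J^2 \supset \dots \supset J^n = 0
\]
is strictly descending with one-dimensional layers (the total length must match $\dim_F A = n$). Pick any $x \in J \setminus J^2$; Nakayama's lemma gives $xA = J$. The crucial claim is that $x^k A = J^k$ for every $0 \le k \le n$, proved inductively using the observation that left multiplication $L_x \colon A \to A$, $a \mapsto xa$, is a right $A$-module endomorphism. Its image is $xA = J$, so rank-nullity forces $\dim \ker L_x = 1$, and uniseriality identifies $\ker L_x = J^{n-1}$. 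Restricting $L_x$ to the right ideal $J^{k-1}$ yields $\dim x J^{k-1} = \dim J^{k-1} - \dim J^{n-1} = n - k$, which matches $\dim J^k$, so $x J^{k-1} = J^k$ and the induction closes. In particular $x^k \in J^k \setminus J^{k+1}$ for $k < n$ while $x^n = 0$; a lowest-layer argument in the filtration makes $\{1, x, \dots, x^{n-1}\}$ $F$-linearly independent, hence an $F$-basis, delivering the desired isomorphism.

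The main obstacle is precisely this crucial claim: in a potentially non-commutative algebra there is no a priori reason why successive powers of $x$ should remain in the correct layer of the filtration, and one must rule out scenarios such as $x^2 \in J^3$. Recognising $L_x$ as a right-module endomorphism and reading off $\ker L_x = J^{n-1}$ from uniseriality is what turns the delicate control of the powers of $x$ into a transparent dimension count.
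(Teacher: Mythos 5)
The paper states Lemma~\ref{lem: basic local Nakayama} without any proof, treating it as a standard fact, so there is no argument of the authors' to compare yours against; judged on its own, your proof is complete and correct. The one place where you work harder than necessary is the ``crucial claim'' $x^kA=J^k$ (writing $J=\radikal(A)$): since the radical powers are two-sided ideals, $AJ^{k-1}=J^{k-1}$, so once $xA=J$ is known one gets $J^k=J\cdot J^{k-1}=(xA)J^{k-1}=x\bigl(AJ^{k-1}\bigr)=xJ^{k-1}$, and induction immediately yields $J^k=x\,(x^{k-1}A)=x^kA$ with no need for the rank--nullity analysis of $L_x$ or the identification $\ker L_x=J^{n-1}$ via uniseriality. (Your route is nonetheless valid; uniseriality is of course still essential elsewhere, namely to guarantee that each layer $J^k/J^{k+1}$ is one-dimensional so that a single element $x\in J\setminus J^2$ generates $J$ by Nakayama's lemma.) One point you handle correctly but which deserves to be made explicit: you read ``basic local'' as $A/J\cong F$, which is the reading under which the lemma is true and the one relevant to its application in Theorem~\ref{thm: truncated poly. ring}, where $F$ is assumed to be a splitting field. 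If ``basic'' were taken only in the weaker sense that $A/J$ is a division ring, the statement would fail --- a field extension of degree $n\ge 2$ is an $n$-dimensional local Nakayama algebra, basic in the weak sense, that is not isomorphic to $F[X]/(X^n)$.
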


\begin{thm}
    \label{thm: truncated poly. ring}
    Let \( n \in \nat \) and suppose that \( F \) is a splitting field for \( A \).
    Then the following statements are equivalent.
    \begin{enumerate}
        \item \( A \) is Morita equivalent to \( F[X]/(X^n) \).
            \label{item: Morita equiv. to truncated polynomial}
        \item \( k(A) = n \), \( \codim \kr{2}A \le 2 \), and \( \ell(A) = 1 \).
            \label{item: numerical restrictions}
    \end{enumerate}
\end{thm}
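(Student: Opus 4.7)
The plan is to use Theorem~\ref{thm: Morita invariance} to reduce to the basic case in both directions, then combine Theorem~\ref{thm: Brauer Okuyama}\ref{item: Okuyama} with Lemma~\ref{lem: basic local Nakayama}. The forward implication \ref{item: Morita equiv. to truncated polynomial} \( \Rightarrow \) \ref{item: numerical restrictions} is essentially a direct computation on \( A = F[X]/(X^n) \): this algebra is local (so \( \ell(A) = 1 \)) and commutative (so \( \komm{A} = 0 \), giving \( k(A) = \dim A = n \)), and \( \kr{2}A = \rad{2}{A} = (X^2)/(X^n) \) has codimension \( \min(n, 2) \le 2 \). Theorem~\ref{thm: Morita invariance} then transports these values across the Morita class.

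For the reverse implication, I would first replace \( A \) by its basic algebra; this preserves all three hypotheses because \( \ell(A) \), \( k(A) \), and \( \codim \kr{2}A \) are Morita invariant. The assumption \( \ell(A) = 1 \) then forces \( A \) to be basic local with unique simple module \( S \) and \( A/\radikal(A) \cong F \). Writing \( m = \radikal(A) \), Theorem~\ref{thm: Brauer Okuyama}\ref{item: Okuyama} applied to the single primitive idempotent \( 1 \) reads
\begin{equation*}
    2 \ge \codim \kr{2}A = 1 + \dim \Ext1(S, S) = 1 + \dim m/m^2,
\end{equation*}
so \( \dim m/m^2 \le 1 \). Now I would lift a generator of \( m/m^2 \) to some \( x \in m \) (or take \( x = 0 \) if \( m = m^2 \)) and form the \( F \)-algebra homomorphism \( \phi \colon F[X] \to A \) sending \( X \) to \( x \); this is well defined since \( F \) is central in \( A \). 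Iterated multiplication shows each layer \( m^k/m^{k+1} \) is spanned by the class of \( x^k \), so \( A \) is spanned by the powers of \( x \) and \( \phi \) is surjective; finite-dimensionality of \( A \) then forces \( \ker \phi = (X^r) \) for some \( r \), giving \( A \cong F[X]/(X^r) \). In particular \( A \) is commutative, so \( r = \dim A = k(A) = n \), and Lemma~\ref{lem: basic local Nakayama} concludes.

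The step I expect to be most delicate is the last one: promoting the infinitesimal bound \( \dim m/m^2 \le 1 \) to a global algebra isomorphism \( A \cong F[X]/(X^r) \). It is clear that the associated graded \( \operatorname{gr}(A) \) is commutative and generated by one element, but one must still exhibit a surjection \( F[X] \twoheadrightarrow A \) rather than only onto \( \operatorname{gr}(A) \). The map \( \phi \) above accomplishes this precisely because \( F \) sits centrally in \( A \), so sending \( X \) to any \( x \in A \) automatically defines an algebra homomorphism.
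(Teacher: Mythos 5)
Your proof is correct and follows essentially the same route as the paper: reduce to the basic case via Theorem~\ref{thm: Morita invariance}, use Theorem~\ref{thm: Brauer Okuyama}\ref{item: Okuyama} to get \( \dim \radikal(A)/\rad{2}{A} \le 1 \), conclude \( A \cong F[X]/(X^r) \), and then use commutativity to identify \( r = k(A) = n \). The only difference is that you prove Lemma~\ref{lem: basic local Nakayama} inline (via the explicit surjection \( F[X] \twoheadrightarrow A \)) where the paper simply cites it, which is a welcome addition since the paper leaves that lemma unproved.
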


\begin{proof}
    \ref{item: Morita equiv. to truncated polynomial} \( \implies \) \ref{item: numerical restrictions}:
    Clear by Theorem~\ref{thm: Morita invariance}.

    \ref{item: numerical restrictions} \( \implies \) \ref{item: Morita equiv. to truncated polynomial}:
    We may assume \( A \) is basic.
    Since \( \ell(A) = 1 \),
    we have the unique simple right \( A \)-module \( S \coloneqq A/\radikal(A) \).
    By Theorem~\ref{thm: Brauer Okuyama},
    \begin{align*}
        \dim \radikal(A)/\rad{2}A
        &=
        \dim \Ext1(S, S)
        \\
        &=
        \ell(A) + \dim \Ext1(S, S) - \ell(A)
        \\
        &=
        \codim \kr{2}A - \codim \kr{1}A
        \\
        &\le
        2 - 1
        =
        1.
    \end{align*}
    Thus \( A \) is a basic local Nakayama algebra.
    By Lemma~\ref{lem: basic local Nakayama},
    we have \( A \cong F[X]/(X^m) \) for some \( m \in \nat \).
    Since \( A \) is commutative,
    \begin{equation*}
        n = \codim \komm{A} = \dim A = \dim F[X]/(X^m) = m.
    \end{equation*}
    Therefore \( A \cong F[X]/(X^n) \).
\end{proof}

\begin{proof}[Proof of Theorem~\textup{\ref{thm: k = 1}}]
    \ref{item: Morita equiv. to F} \( \implies \) \ref{item: k = 1}:
    Clear from Theorem~\ref{thm: Morita invariance}.

    \ref{item: k = 1} \( \implies \) \ref{item: Morita equiv. to F}:
    Clear from Theorem~\ref{thm: truncated poly. ring}.
\end{proof}

\begin{proof}[Proof of Theorem~\textup{\ref{thm: k = 2 and l = 1}}]
    \ref{item: Morita equiv. to F[X]/(X^2)} \( \implies \) \ref{item: k = 2 and l = 1}:
    Clear from Theorem~\ref{thm: Morita invariance}.

    \ref{item: k = 2 and l = 1} \( \implies \) \ref{item: Morita equiv. to F[X]/(X^2)}:
    Clear from Theorem~\ref{thm: truncated poly. ring}.
\end{proof}

\begin{note}
    \label{note: k = 3, l = 1}
    Consider the four-dimensional basic local Frobenius algebra defined by
    \begin{equation*}
        A_q \coloneqq F\langle X, Y \rangle/(X^2, Y^2, XY - qYX)
        \qquad
        (q \in F \setminus \{ 0, 1 \}).
    \end{equation*}
    Since \( A_q \) is non-commutative, we have \( k(A_q) = 3 \) by Theorems~\ref{thm: k = 1} and \ref{thm: k = 2 and l = 1}.
    Hence, we may have infinitely many non-equivalent finite-dimensional algebras \( A \) with \( k(A) = 3 \) and \( \ell(A) = 1 \) unlike the case \( k(A) = 1, 2 \).
	(This is because \( A_q \cong A_r \) if and only if \( \{ q^{\pm1} \} = \{ r^{\pm1} \} \) for \( q, r \in F \setminus \{ 0 \} \).
    See \cite[p.~865]{Yam96}.)
\end{note}

\begin{note}
    \label{note: k = l = 2}
    Consider the \( n \)-Kronecker algebra ---
    the path algebra \( FQ_n \) of the \( n \)-Kronecker quiver \( Q_n \) defined by
    \begin{equation*}
        \begin{tikzcd}
            \circ
            \arrow[r, draw=none, "\raisebox{+1.5ex}{\vdots}" description]
            \arrow[r, bend left,        "\alpha_1"]
            \arrow[r, bend right, swap, "\alpha_n"]
            &
            \circ
        \end{tikzcd}.
    \end{equation*}
    Then \( k(FQ_n) = \ell(FQ_n) = 2 \).
    Hence, we have infinitely many non-equivalent finite-dimensional algebras \( A \) with \( k(A) = \ell(A) = 2 \).
\end{note}

We have determined small algebras in Theorems~\ref{thm: k = 1} and \ref{thm: k = 2 and l = 1}.
The other extreme case is that the algebra \( A \) with \( k(A) = \ell(A) \).
These algebras are close to be semisimple to some extent.

\begin{prop}
    \label{prop: k = l}
    Suppose that \( F \) is a splitting field for \( A \).
    Then the following statements are equivalent.
    \begin{enumerate}
        \item \( k(A) = \ell(A) \)
            \label{item: k = l}
        \item \( \radikal(A) \subseteq \komm{A} \)
            \label{item: J in K}
    \end{enumerate}
    In particular, \( k(A) = \ell(A) \) holds if \( A \) is semisimple.
    The converse also holds if \( A \) is symmetric, commutative, or local.
\end{prop}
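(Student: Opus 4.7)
My plan is to observe that (i) $\iff$ (ii) is essentially a restatement of Theorem~\ref{thm: Brauer Okuyama}\ref{item: Brauer}. Specifically, since $k(A) = \codim \komm{A}$ and, by that theorem, $\ell(A) = \codim \kr{1}{A}$, we have
\[
    k(A) - \ell(A)
    = \codim \komm{A} - \codim \kr{1}{A}
    = \dim\bigl((\komm{A} + \radikal(A))/\komm{A}\bigr),
\]
which vanishes exactly when $\radikal(A) \subseteq \komm{A}$. The implication ``$A$ semisimple $\Rightarrow k(A) = \ell(A)$'' is then immediate from (ii), since $\radikal(A) = 0$.

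For the three converses, my goal is to show that (ii) forces $\radikal(A) = 0$ under each additional hypothesis. When $A$ is commutative, $\komm{A} = 0$ and the conclusion is automatic. When $A$ is local, $\ell(A) = 1$, hence $k(A) = 1$, and invoking Theorem~\ref{thm: k = 1} gives that $A$ is Morita equivalent to $F$; since a finite-dimensional local algebra is basic, this sharpens to $A \cong F$.

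The symmetric case is where the real work lies. I would choose a symmetrizing linear form $\lambda \colon A \to F$ and use the non-degenerate associative bilinear pairing $(x, y) \mapsto \lambda(xy)$ to compute orthogonal complements: a short calculation establishes the classical identities $\komm{A}^\perp = Z(A)$ and $\radikal(A)^\perp = \operatorname{soc}(A)$ (the latter using that $\ker \lambda$ contains no nonzero one-sided ideal, so the $\perp$ of a two-sided ideal coincides with its one-sided annihilator). From these I obtain $k(A) = \dim Z(A)$ and, combining with Theorem~\ref{thm: Brauer Okuyama}\ref{item: Brauer}, $\ell(A) = \dim(Z(A) \cap \operatorname{soc}(A))$, the dimension of the Reynolds ideal. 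Assuming $k(A) = \ell(A)$ then forces $Z(A) \subseteq \operatorname{soc}(A)$; in particular $1 \in \operatorname{soc}(A)$, whence $\radikal(A) = 1 \cdot \radikal(A) = 0$. I expect the main obstacle to be setting up this orthogonality dictionary cleanly — especially the Reynolds ideal identification of $\ell(A)$ — after which the collapse $\radikal(A) = 0$ is automatic from $1 \in Z(A) \subseteq \operatorname{soc}(A)$.
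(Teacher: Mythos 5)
Your proof is correct, and apart from the symmetric case it coincides with the paper's argument: the equivalence \ref{item: k = l} $\iff$ \ref{item: J in K} via Theorem~\ref{thm: Brauer Okuyama}\ref{item: Brauer} (so $k(A)-\ell(A)=\dim \kr{1}{A}/\komm{A}$), the semisimple direction from $\radikal(A)=0$, the commutative case from $\komm{A}=0$, and the local case by reducing to $k(A)=1$ and invoking Theorem~\ref{thm: k = 1}. In the symmetric case you take a genuinely different and longer route. The paper's argument is a one-liner: given $\radikal(A)\subseteq\komm{A}$, one has $\komm{A}\subseteq\ker\tau$ because $\tau(xy)=\tau(yx)$, and $\ker\tau$ contains no nonzero one-sided ideal by non-degeneracy of the symmetrizing form, so the ideal $\radikal(A)$ must vanish --- no orthogonal complements are needed. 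Your version instead builds the full dictionary $\komm{A}^{\perp}=Z(A)$ and $\radikal(A)^{\perp}=\operatorname{soc}(A)$, deduces $k(A)=\dim Z(A)$ and $\ell(A)=\dim\bigl(Z(A)\cap\operatorname{soc}(A)\bigr)$ (the Reynolds ideal), and concludes from $Z(A)\subseteq\operatorname{soc}(A)$ that $1\in\operatorname{soc}(A)$, hence $\radikal(A)=0$. All of these identifications are standard and correct for a symmetric algebra, so the argument goes through; what it buys is the explicit interpretation of $k(A)$ as $\dim Z(A)$ and of $\ell(A)$ as the dimension of the Reynolds ideal (which is of independent interest and connects to the K\"ulshammer--Brou\'e circle of ideas), at the cost of proving substantially more than the statement requires. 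If you want the minimal proof, note that you never need $\radikal(A)^{\perp}=\operatorname{soc}(A)$ at all: once $\radikal(A)\subseteq\komm{A}\subseteq\ker\tau$, non-degeneracy already kills the radical.
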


\begin{proof}
    \ref{item: k = l} \( \iff \) \ref{item: J in K}:
    Evidently \( k(A) = \ell(A) \) if and only if \( \komm{A} = \kr{1}A \), i.e., \( \radikal(A) \subseteq \komm{A} \).

    Symmetric case:
    Assume \( A \) is symmetric with a symmetrizing form \( \tau \colon A \to F \) and \( k(A) = \ell(A) \).
    Then \( \radikal(A) \subseteq \komm{A} \subseteq \ker \tau \).
    Hence we have \( \radikal(A) = 0 \).

    Commutative case:
    Assume \( A \) is commutative and \( k(A) = \ell(A) \).
    Then \( \radikal(A) \subseteq \komm{A} = 0 \).

    Local case:
    Assume \( A \) is local and \( k(A) = \ell(A) \).
    Then \( k(A) = \ell(A) = 1 \) and \( A \) is Morita equivalent to \( F \) by Theorem~\ref{thm: k = 1}.
    Hence we have \( \radikal(A) = 0 \).
\end{proof}

\appendix

\section{The {C}hlebowitz theorem}
Chlebowitz already studied similar problems and she proved the following stronger facts, 
which are essentially extensions of the previous works~\cite{Kue84, CK92} to non-symmetric cases.
\begin{thm}[Chlebowitz~\cite{Chl91}]
	\label{thm: Chlebowitz}
	Let \( A \) be a finite-dimensional local algebra over an algebraically closed field.
	Then the following holds.
	\begin{enumerate}
		\item If \( k(A) \le 3 \) then \( \dim A \le 4 \).
		\item If \( k(A) = 4 \) then \( \dim A \le 10 \).
		\item If \( k(A) = 5 \) and \( \dim \radikal(A)/\rad{2}{A} \le 2 \) then \( \dim A \le 12 \).
	\end{enumerate}
\end{thm}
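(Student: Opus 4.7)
The plan is to use the local hypothesis to reduce everything to the structure of the Jacobson radical $J := \radikal(A)$. Since $A$ is basic local with residue field $F$ (as $F$ is algebraically closed), we have $\ell(A) = 1$ and $\dim A = 1 + \dim J$. Writing a general element as $\alpha + j$ with $\alpha \in F$ and $j \in J$, every commutator $[a, b]$ reduces to a commutator of radical elements, so
\begin{equation*}
    \komm{A} = [J, J] \subseteq \rad{2}A
    \qquad\text{and}\qquad
    k(A) = 1 + \dim J/[J, J].
\end{equation*}
The task is therefore to bound $\dim J$ in terms of $\dim J/[J, J]$.

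The first invariant to pin down is $d := \dim J/\rad{2}{A} = \dim \Ext1(S, S)$, the minimal number of ideal generators of $J$. Theorem~\ref{thm: Brauer Okuyama}\ref{item: bound for k}, applied with $\ell(A) = 1$, gives $k(A) \ge 1 + d$, which forces $d \le 2$, $d \le 3$, and $d \le 2$ in cases (i), (ii), (iii) respectively (the assumption in (iii) is exactly the bound on $d$). Next I would filter $J/[J, J]$ by the images of the radical powers and rewrite the core dimension identity as
\begin{equation*}
    k(A) - 1 \;=\; \sum_{n \ge 1} \dim \bigl( \rad{n}{A} + [J, J] \bigr) \big/ \bigl( \rad{n+1}{A} + [J, J] \bigr),
\end{equation*}
a finite sum because $J$ is nilpotent. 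Each summand is a quotient of $\rad{n}{A}/\rad{n+1}{A}$, so the total $\dim J = \sum_n \dim \rad{n}{A}/\rad{n+1}{A}$ is controlled by the summands above together with the amount of ``slack'' contributed by commutators lying in each $\rad{n}{A}$.

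From here the proof proceeds by case analysis on $d$. The commutative branches ($d \le 1$) are immediate: Lemma~\ref{lem: basic local Nakayama} and Theorem~\ref{thm: truncated poly. ring} identify $A$ with $F[X]/(X^{k(A)})$, so the bound is trivial. For $d \ge 2$ I would fix lifts $x_1, \dots, x_d \in J$ of a basis of $J/\rad{2}{A}$, pass to the associated graded algebra $\mathrm{gr}(A) = \bigoplus_{n \ge 0} \rad{n}{A}/\rad{n+1}{A}$, and argue in PBW style: the graded commutators spanned by the $[x_{i_1} \cdots x_{i_r}, x_{j_1} \cdots x_{j_s}]$ generate the image of $[J, J]$ in $\mathrm{gr}(A)$, so the sum formula above translates into relations in $\mathrm{gr}(A)$ bounding each graded piece by its dimension in the symmetric algebra $F[x_1, \dots, x_d]$ while simultaneously limiting the Loewy length. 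Balancing these two effects yields the bounds $\dim A \le 4, 10, 12$.

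The main obstacle will be case (ii) with $d = 3$, where neither the symmetric-algebra bound on graded pieces nor the Loewy-length bound is sharp by itself: one must track the precise image of $[J, J]$ in each $\rad{n}{A}/\rad{n+1}{A}$ and exclude by hand the borderline non-commutative local algebras of dimension at least $11$. The combinatorial bookkeeping required to do this --- and analogously for (iii), where $d = 2$ but $k(A) = 5$ permits higher Loewy length --- is the heart of Chlebowitz's argument~\cite{Chl91} and accounts for most of the length of the original proof.
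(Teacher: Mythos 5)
First, a point of comparison: the paper itself offers no proof of Theorem~\ref{thm: Chlebowitz}; it is quoted in the appendix from Chlebowitz's unpublished thesis \cite{Chl91}, so there is no argument in the paper to measure yours against. Judged on its own, your proposal contains correct reductions but not a proof. Writing $J \coloneqq \radikal(A)$, the identities $\komm{A}=[J,J]\subseteq\rad{2}{A}$ and $k(A)=1+\dim J/[J,J]$ for a local algebra over an algebraically closed field are right, as is the bound $d\coloneqq\dim J/\rad{2}{A}\le k(A)-1$ via Theorem~\ref{thm: Brauer Okuyama}\ref{item: bound for k}, and the branch $d\le 1$ does collapse to $F[X]/(X^m)$ via Lemma~\ref{lem: basic local Nakayama}.

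Everything after that is where the theorem actually lives, and it is missing: ``balancing these two effects yields the bounds'' is not an argument, and you concede that the required bookkeeping ``is the heart of Chlebowitz's argument.'' Two concrete defects in the sketch as written. (a) Modulo $\komm{A}$ a monomial $x_{i_1}\cdots x_{i_n}$ is identified only with its \emph{cyclic} rotations (take $u=x_{i_1}\cdots x_{i_k}$ and $v=x_{i_{k+1}}\cdots x_{i_n}$ in $uv-vu$), not with arbitrary reorderings; the degree-$n$ piece of $T(V)/[T(V),T(V)]$ is spanned by cyclic words, whose number exceeds $\dim F[x_1,\dots,x_d]_n$ already for $d=3$, $n=3$ ($11$ versus $10$) --- exactly the range needed for part (ii) --- and for $d=2$, $n=4$ ($6$ versus $5$), needed for part (iii). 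The claimed comparison with the symmetric algebra is therefore false in the cases that matter. (b) The filtration that $\{\rad{n}{A}\}_n$ induces on $[J,J]$ need not agree with the commutator subspace of the associated graded algebra, so converting estimates in $\mathrm{gr}(A)$ back into the bound on $\dim A$ requires care that the sketch does not supply. Neither point dooms the overall strategy, but as it stands the proposal reduces the theorem to its hard part and then cites \cite{Chl91} for that part.
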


\section*{Acknowledgements}
We wish to thank Yoshihiro Otokita for helpful discussions.
This is part of the PhD thesis of the second author.

After the first version of this paper appeared on the arXiv,
Burkhard K\"ulshammer informed us that 
Theorems~\ref{thm: k = 1} and \ref{thm: k = 2 and l = 1} in this version 
already appear in the unpublished PhD thesis by Marlene Chlebowitz~\cite{Chl91}. 
We thank Burkhard K\"ulshammer very much.

\end{document}